\newcommand{\Ad}{\operatorname{Ad}}
\newcommand{\Tr}{\operatorname{Tr}}
\newcommand{\Hom}{\operatorname{Hom}}
\renewcommand{\subset}{\subseteq}
\newcommand{\Wh}{\operatorname{Wh}}
\newtheorem{theorem}{Theorem}[section]
\newtheorem{lemma}[theorem]{Lemma}
\newtheorem{proposition}[theorem]{Proposition}
\newtheorem{remark}[theorem]{Remark}
\date{\today}
\renewcommand{\subset}{\subseteq}
\newcommand{\sspan}{\operatorname{span}}
\newcommand{\End}{\mathrm{End}}
\newcommand{\resu}{\mathrm{res}_u}
\newcommand{\ch}{\mathrm{ch}}
\begin{document}

\title[Triangular system for local character expansions]{A triangular system for local character expansions of Iwahori-spherical representations of general linear groups}


\begin{abstract}
For Iwahori-spherical representations of non-Archimedean general linear groups, Chan-Savin recently expressed the Whittaker functor as a restriction to an isotypic component of a finite Iwahori-Hecke algebra module. We generalize this method to describe principal degenerate Whittaker functors. Concurrently, we view Murnaghan's formula for the Harish-Chandra--Howe character as a Grothendieck group expansion of the same module.

Comparing the two approaches through the lens of Zelevinsky's PSH-algebras, we obtain an explicit unitriangular transition matrix between coefficients of the character expansion and the principal degenerate Whittaker dimensions.

\end{abstract}

\author{Maxim Gurevich}
\address{Department of Mathematics, Technion -- Israel Institute of Technology, Haifa, Israel.}
\email{maxg@technion.ac.il}

\maketitle

\section{Introduction}

Let $G_n = GL_n(F)$ be the general linear locally compact group, defined over a $p$-adic field $F$. Let $\mathfrak{g}_n = \mathfrak{gl}_n(F)$ be its Lie algebra. Integer partitions $P(n)$ naturally parameterize nilpotent $\Ad(G_n)$-orbits $\{\mathcal{O}_{\alpha}\}_{\alpha\in P(n)}$ in $\mathfrak{g}_n$.

We focus on two sets of integer invariants, indexed by those nilpotent orbits, attached to each smooth complex irreducible representation $\pi$ of $G_n$.

The first is the Harish-Chandra--Howe local character expansion \cite{howe74,hcbook}: The trace of $\pi$, viewed as a distribution on $G_n$, is known to be represented by a locally constant integrable function $\Theta_\pi$ on regular elements of the group. For a regular element $X\in \mathfrak{g}_n$ close enough to zero, a celebrated expansion
\begin{equation}\label{eq:expa}
\Theta_\pi(1+ X) = \sum_{\alpha\in P(n)} c_\alpha(\pi) \widehat{\mu}_{\mathcal{O}_\alpha}(X),\quad c_\alpha(\pi)\in \mathbb{Z}\;,
\end{equation}
is known to hold, where $\widehat{\mu}_{\mathcal{O}_\alpha},\, \alpha\in P(n)$, are suitably normalized functions representing the $\mathfrak{g}_n$-Fourier transform of the orbital integral distribution coming from the orbit $\mathcal{O}_\alpha$.

The second set of invariants are dimensions of certain degenerate Whittaker models attached to $\pi$ and a nilpotent $\Ad(G_n)$-orbit. Namely, for $\alpha=(\alpha_1,\ldots,\alpha_k) \in P(n)$, a standard Levi subgroup $G_{\alpha_1}\times \cdots \times G_{\alpha_k}\cong  M_{\alpha}  < G_n$ is attached, and an exact Jacquet functor $\mathbf{r}_{\alpha}$ is defined, which produces a finite-length smooth $M_\alpha$-representation $\mathbf{r}_\alpha(\pi)$.

We denote the integers
\[
d_\alpha(\pi) = \dim_{\mathbb{C}} \Wh(\mathbf{r}_{\alpha}(\pi)),\; \alpha\in P(n)\;,
\]
where $\Wh$ stands for the Whittaker functor on $M_\alpha$-representations. 

The spaces $\Wh(\mathbf{r}_{\alpha}(\pi))$, to which we will refer as principal degenerate Whittaker models, received attention as early as the foundational work of Zelevinsky \cite{Zel}. They were incorporated in the more general framework of degenerate Whittaker models of Moeglin--Waldspurger \cite{mw87}, and more recently were shown in \cite{ggs} to be minimal, in the proper sense, of such models.

Explicit formulas for values of the dimensions $d_\alpha(\pi)$ in special classes of representations were recently explored in \cite{mitra-sayag,mitra-deg}.

By exactness and uniqueness properties of the Whittaker functor, those invariants may be described as the number of (Whittaker-)generic irreducible constituents of $\mathbf{r}_{\alpha}(\pi)$. They may also be expressed in a slightly different form: For each $0\leq k\leq n$, the exact Bernstein--Zelevinsky derivative functor $\sigma \mapsto \sigma^{(k)}$ takes smooth $G_n$-representations to smooth $G_{n-k}$-representations. Here, we treat $G_0$-representations merely as vector spaces.

In these terms $d_{\alpha}(\pi)$ counts the dimension of the iterated derivative 
\[
(\cdots(\pi^{(\alpha_1)})^{(\alpha_2)}\cdots)^{(\alpha_k)}\;.
\]

This note explicates a linear formula for transition between the invariants $\{c_\alpha(\pi)\}_{\alpha\in P(n)}$ and $\{d_\alpha(\pi)\}_{\alpha\in P(n)}$ for the case when $\pi$ is a Iwahori-spherical representation. 


Let $I_n< G_n$ denote an Iwahori subgroup, which is contained in a maximal compact subgroup $I_n< K_n = GL_n(\mathcal{O}_F)< G_n$ ($\mathcal{O}_F$ stands for the ring of integers of $F$). An irreducible $G_n$-representation $\pi$ is called Iwahori-spherical if its subspace $\pi_0 = \pi^{I_n}$ of $I_n$-invariant vectors is non-zero.

Iwahori-spherical irreducible representations are known to be precisely those which appear in the principal Bernstein block of $G_n$-representations.

\begin{theorem}\label{thm:main}
Assume that the residual characteristic of the field $F$ is greater than $2n$.

For each Iwahori-spherical irreducible representation $\pi$ of $G_n$ and each partition $\alpha\in P(n)$, the formula
\[
d_\alpha(\pi) = \sum_{\beta\in P(n)} s(\alpha,\beta^t) c_\beta(\pi)
\]
holds, where $s(\alpha,\beta)\in \mathbb{Z}_{>0}$ are familiar combinatorial invariants described in \eqref{eq:graph}, and $\beta \mapsto \beta^t$ is the combinatorial transposition involution on $P(n)$.

\end{theorem}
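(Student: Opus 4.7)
The plan is to read both families of invariants from a single module, namely the Iwahori-fixed subspace $\pi_0 = \pi^{I_n}$, viewed as a module over the (finite-type) Iwahori--Hecke algebra $\Hecke_n$ of the principal block. Irreducible $\Hecke_n$-modules are parameterized by $P(n)$, and I will exhibit two distinguished $P(n)$-indexed bases of $K_0(\Hecke_n\text{-mod})$ whose change-of-basis matrix is precisely $s(\alpha,\beta^t)$; the theorem then drops out by expanding $[\pi_0]$ in both.

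\emph{Step 1 (Whittaker side).} I would extend the Chan--Savin identification of the Whittaker functor on Iwahori-spherical representations to the full family $\pi \mapsto \Wh(\mathbf{r}_\alpha(\pi))$. For $\alpha = (\alpha_1,\ldots,\alpha_k) \in P(n)$, the key compatibility is that Jacquet restriction $\mathbf{r}_\alpha$ on the group descends, at the level of Iwahori invariants, to restriction from $\Hecke_n$ to a parabolic subalgebra $\Hecke_\alpha$, while the Whittaker functor on $M_\alpha$ translates to projection onto a sign-type character of $\Hecke_\alpha$. This should yield a formula of the form
\[
d_\alpha(\pi) = \dim \Hom_{\Hecke_\alpha}(\mathrm{sgn}_\alpha,\, \pi_0) = [\pi_0 : W_\alpha],
\]
where $W_\alpha$ is the $\Hecke_n$-module parabolically induced from the sign character of $\Hecke_\alpha$. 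The collection $\{W_\alpha\}_{\alpha\in P(n)}$ is one of the two desired bases.

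\emph{Step 2 (Character side).} Murnaghan's local character expansion formula, applied under the Iwahori-spherical hypothesis, expresses each $c_\beta(\pi)$ as a pairing of $\pi_0$ against a virtual character of $\Hecke_n$ indexed by the nilpotent orbit $\mathcal{O}_\beta$ (equivalently, by the cycle type $\beta$ on the symmetric-group side). I would recast this pairing as the coefficient of $[\pi_0]$ in a second $P(n)$-indexed basis $\{V_\beta\}$ of $K_0(\Hecke_n\text{-mod})$ (the Deligne--Lusztig/standard basis), giving
\[
[\pi_0] = \sum_{\beta \in P(n)} c_\beta(\pi)\, [V_\beta].
\]

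\emph{Step 3 (PSH comparison).} The Zelevinsky PSH-algebra isomorphism
\[
\bigoplus_{n\geq 0} K_0(\Hecke_n\text{-mod}) \;\xrightarrow{\sim}\; \Lambda
\]
identifies $\{W_\alpha\}$ and $\{V_\beta\}$ with standard symmetric-function bases whose transition matrix consists of the combinatorial numbers $s(\alpha,\beta^t)$; the transposition is forced by the Zelevinsky involution intertwining parabolic induction from sign characters (a row-type operation) with character-theoretic expansions (a column-type operation). Substituting the Step~2 expansion into the Step~1 formula then gives the claimed triangular identity, and positivity/unitriangularity follow from the tableau interpretation of $s(\cdot,\cdot)$.

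\emph{Main obstacle.} The technically hardest step is Step~1: the generalization of Chan--Savin to principal degenerate Whittaker functors. One must match, in a functorial way compatible with $\pi_0 \mapsto \mathbf{r}_\alpha(\pi)^{I_{M_\alpha}}$, the Whittaker datum on each $GL_{\alpha_i}$ factor with the sign character on the corresponding parabolic Hecke subalgebra. The hypothesis that the residual characteristic exceeds $2n$ enters here to ensure that the relevant Whittaker characters descend cleanly through the Moy--Prasad filtration, so that the finite-Hecke-algebra interpretation is valid. Once Step~1 is secured, Step~2 is a direct re-reading of Murnaghan's formula in the Iwahori-spherical setting, and Step~3 is a standard PSH computation.
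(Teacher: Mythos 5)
Your proposal follows essentially the same route as the paper: both families of invariants are read off the class $[\pi_0]$ in the Grothendieck group of the finite Hecke algebra --- the $d_\alpha$ as pairings against the modules induced from sign characters (the paper's $y_\alpha$, obtained exactly as you suggest from a Jacquet-functor/Hecke-restriction compatibility combined with Chan--Savin on each Levi factor), the $c_\beta$ as coefficients in the basis of modules induced from trivial characters (the paper's $x_{\beta^t}$, extracted from Murnaghan's formula via the restriction-to-unipotent-elements isomorphism) --- and the matrix $s(\alpha,\beta^t)$ is precisely Zelevinsky's pairing $\langle x_{\beta^t}, y_\alpha\rangle$. One correction: the hypothesis that the residual characteristic exceeds $2n$ is needed for Murnaghan's character expansion in your Step 2, not for the degenerate Whittaker/Hecke comparison of Step 1, which holds unconditionally.
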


The resulting transition matrix $(s(\alpha,\beta^t))_{\alpha,\beta}$ is evidently unitriangular, with respect to the natural partial order on the set of partitions $P(n)$. This is also the topological order on nilpotent orbits, i.e. $\mathcal{O}_\alpha \subset \overline{\mathcal{O}_\beta}$ is equivalent to $\alpha\leq \beta$.

Indeed, the combinatorial classification of Zelevinsky identifies, for an irreducible representation $\pi$, a unique maximal $\alpha_\pi\in P(n)$ with a non-zero $d_{\alpha_\pi}(\pi)$ (in fact, $d_{\alpha_\pi}(\pi)= 1$). The $GL_n$ case of the celebrated result of Moeglin--Waldspurger \cite{mw87} then showed that $\alpha= \alpha_\pi$ is also the maximal partition for which the coefficient $c_{\alpha}(\pi)$ does not vanish (see e.g. \cite{dimasahi-surv}), and that $c_{\alpha_\pi}(\pi)= 1$ . The orbit $\mathcal{O}_{\alpha_\pi}$ thus becomes the wave-front set of $\pi$ in the general formalism for representations of reductive $p$-adic groups.

Theorem \ref{thm:main} is therefore a quantitative refinement, for the Iwahori-invariant case, of this classical comparison. An abstract general triangular expression of the local character expansion was suggested by Barbasch-Moy \cite{bm97}, which was a main influence for this work. Similarly to our proof, their treatment involved a reduction to finite group analogues of degenerate Whittaker models.

Let us elaborate on the line of reasoning for the proof of Theorem \ref{thm:main}. 

The finite-dimensional semisimple Hecke algebra $H_n$ of $I_n$-bi-invariant complex functions on $K_n$ acts naturally on the space $\pi_0$.

The key argument is that both sets of our invariants of interest for $\pi$ are encoded in the $H_n$-module $\pi_0$. For local character expansion coefficients this phenomenon is interpreted out of Murnaghan's elegant formulas \cite{murn-crelle} for depth-zero representations (for large enough residual characteristic of $F$), which are based on expressions from \cite{walds-hall} of orbital integrals on parahoric subgroups.

As for dimensions of principal degenerate Whittaker models, their encoding in $\pi_0$ stems from the non-degenerate case in the works of Chan--Savin \cite{CS1} and Barbasch--Moy \cite{bm94}.

The comparison between the two lines of work mentioned above becomes transparent when put into the context of Zelevinsky's PSH-algebras \cite{zel-book} approach to the representation theory of $H_n$.

More precisely, let us write $\mathfrak{R}_n$ for the Grothendieck group of $H_n$-modules. The sum $\mathfrak{R} = \oplus_{n\geq0} \mathfrak{R}_n$ now becomes a (commutative) ring, relative to a natural induction structure. This ring is known as the universal PSH-algebra, or the Hall algebra when put into the correct context.

The PSH-algebra framework provides two bases $\mathcal{X} = \{x_\alpha\}_{\alpha \in P(n)}$ and $\mathcal{Y} = \{y_\alpha\}_{\alpha \in P(n)}$ for $\mathfrak{R}_n$ and a perfect pairing on the group, by which its structure is axiomatically analysed.

In this context we show that Murnaghan's formula amounts to $\{c_\alpha(\pi)\}_{\alpha\in P(n)}$ being the expansion of $[\pi_0]\in \mathfrak{R}_n$ in the basis $\mathcal{X}$, while the degenerate analogue of \cite{CS1} puts $\{d_\alpha(\pi)\}_{\alpha\in P(n)}$ as the expansion of $[\pi_0]$ in the basis dual to $\mathcal{Y}$, relative to the pairing. Thus, Theorem \ref{thm:main} follows from a computation of the triangular basis transition matrix, which is a combinatorial exercise handled in Section \ref{prop:graph}.

We hope that our result may be extended in future work into a more general understanding of the elusive links between Harish-Chandra--Howe character distributions, Iwahori--Hecke algebra representation theory and invariants arising from degenerate Whittaker models. In that context we mention a related work of Ciubotaru--Mason-Brown--Okada \cite{ciub21} on the role of Arthur packets in the description of the wavefront set for Iwahori-spherical representations.

\subsection{Acknowledgements}
Special thanks to Dan Ciubotaru for insightful remarks that sparked my interest in the problem, and to Fiona Murnaghan for an encouraging discussion, and for sharing her point of view. Thanks are also due to Dima Gourevitch, Kei Yuen Chan, Jiandi Zou and Chuijia Wang for valuable discussions.

\section{Background}

Let us write
\[
C(n) = \left\{ (\alpha_1, \ldots,\alpha_k)\,:\, \alpha_i\in \mathbb{Z}_{>0},\; \alpha_1+\ldots + \alpha_k = n\right\}
\]
for the set of compositions of an integer $n\geq1$, and
\[
P(n) = \{(\alpha_1\geq \ldots \geq \alpha_k)\} \subset C(n)
\]
for the set of its partitions.

For $\alpha=(\alpha_1,\ldots,\alpha_k)\in C(n)$, the standard Levi subgroup $M_{\alpha} <G_n$ is the group of block-diagonal matrices, with block sizes $\alpha_1,\ldots,\alpha_k$. We write $P_\alpha < G_n$ for the standard parabolic subgroup generated by $M_\alpha$ and the upper-triangular matrices in $G_n$, and $N_\alpha< P_\alpha$ for its unipotent radical.

Denoting the ring of integers of $F$ by $\mathcal{O}_F$ and its maximal ideal by $\mathfrak{p}_F$, we consider the maximal compact subgroup $K_n = GL_n(\mathcal{O}_F)< G_n$ and its normal subgroup $K^1_n = I + M_n(\mathfrak{p}_F) < K_n$. Clearly, $K_n/K_n^1 \cong \overline{G}_n:=GL_n(\mathbb{F})$, where $\mathbb{F}$ is the finite residue field $\mathbb{F} = \mathcal{O}_F/\mathfrak{p}_F$.

\subsection{Finite general linear groups}

Let us recall some aspects of the complex representation theory of the finite group $\overline{G}_n$. We follow the elegant treatment of Zelevinsky's book \cite{zel-book} and review some of its results.

We write $\overline{P}_\alpha = \overline{M}_\alpha \overline{N}_\alpha< \overline{G}_n$, for each $\alpha\in C(n)$, for the analogous finite field versions of standard parabolic subgroups and their Levi decompositions. We also write $B_n = P_{(1,\ldots,1)}$ for the minimal standard parabolic subgroup.

Given a composition $\alpha= (\alpha_1,\ldots,\alpha_k)\in C(n)$ and a tuple of representations $\sigma_i$ of $G_{\alpha_i}$, for $i=1,\ldots,k$, the parabolic induction $\sigma_1\times \cdots\times \sigma_k$ is defined as the $G_n$-representation induced from the inflation of $\sigma_1\otimes\cdots \otimes \sigma_k$ to $P_\alpha$.

An irreducible representation of $\overline{G}_n$ is called \textit{unipotent}, if it possesses non-zero $B_n$-invariant vectors.

Let $\mathfrak{R}_n$ denote the Grothendieck group of finite-dimensional complex representations of $\overline{G}_n$, whose irreducible constituents are all unipotent. We write $[\sigma]\in \mathfrak{R}_n$ for the isomorphism class of a $\overline{G}_n$-representation $\sigma$.

A non-degenerate symmetric bilinear form on $\mathfrak{R}_n$ is given by
\[
\langle [\sigma_1],[\sigma_2]\rangle = \dim \Hom_{H_n} (\sigma_1,\sigma_2)\;.
\]

Zelevinsky identifies the sum of abelian groups
\[
\mathfrak{R} = \oplus_{n\geq0} \mathfrak{R}_n
\]
with an axiomatic notion of a \textit{universal positive self-adjoint Hopf algebra}.

In particular, it becomes a commutative associative ring with respect to the parabolic induction product $[\sigma_1][\sigma_2]:= [\sigma_1\times \sigma_2]$. Here, $\mathfrak{R}_0 = \mathbb{Z}$ is viewed formally as the ring identity element.

We write $x_n\in \mathfrak{R}_n$ for the class that corresponds to the trivial representation of $\overline{G}_n$. The irreducible Steinberg representation of $\overline{G}_n$, whose class we write as $y_n\in \mathfrak{R}_n$, plays a role dual to the trivial representation.

Each partition $\alpha\in (\alpha_1,\ldots,\alpha_k)\in P(n)$ gives rise to product elements
\[
x_\alpha = x_{\alpha_1}\cdots x_{\alpha_k},\;y_\alpha = y_{\alpha_1}\cdots y_{\alpha_k}\in \mathfrak{R}_n\;.
\]
For each $n\geq1$, both sets of elements
\[
\mathcal{X} = \{x_\alpha\}_{\alpha\in P(n)},\quad \mathcal{Y} = \{y_\alpha\}_{\alpha\in P(n)}
\]
give bases to the free abelian group $\mathfrak{R}_n$.

Let $\alpha\in (\alpha_1,\ldots,\alpha_k), \;\beta = (\beta_1,\ldots,\beta_l)\in P(n)$ be two given partitions of an integer $n\geq1$. We set the invariant
\begin{equation}\label{eq:graph}
s(\alpha,\beta) = \# \left\{ A \subset \{1,\ldots, k\} \times \{1,\ldots, l\} \;:\; \begin{array}{cc}
\alpha_ i = \# \{ j\;:\; (i,j)\in A\},\; \forall 1\leq i \leq k  \\
\beta_ j = \#\{ i\;:\; (i,j)\in A\},\; \forall 1\leq j \leq l  \end{array} \right\}\;.
\end{equation}
In other words, $s(\alpha, \beta)$ counts the number of bipartite graphs with labelled vertices, whose vertex degrees are prescribed by the given partitions.

\begin{proposition}\cite[3.17(c)]{zel-book}\label{prop:graph}
The pairing in $\mathfrak{R}_n$ satisfies
\[
\langle x_\alpha, y_\beta \rangle = s(\alpha,\beta)\;,
\]
for all $\alpha,\beta\in P(n)$.

\end{proposition}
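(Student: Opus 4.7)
My plan is to prove the pairing identity $\langle x_\alpha, y_\beta\rangle = s(\alpha,\beta)$ by exploiting the core PSH-algebra axiom that the multiplication and the comultiplication on $\mathfrak{R}$ are mutually adjoint under the Hom-pairing: for every $a\otimes b$ and every $c$ in the relevant graded pieces one has
\[
\langle ab, c\rangle \;=\; \langle a\otimes b, \Delta c\rangle.
\]
Iterating this adjointness will let me peel off factors of $y_\beta$ one at a time and transport all the work onto the $x_\alpha$ side, where the comultiplication is particularly transparent.

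Two ingredients feed into the induction. First, the atomic pairings: since the bilinear form is orthogonal across the $\mathbb{Z}_{\ge 0}$-grading, $\langle x_a, y_b\rangle$ vanishes unless $a=b$; and for $a=b$ the classes $x_a$ (trivial representation) and $y_a$ (Steinberg) are distinct irreducibles of $\overline{G}_a$ whenever $a\ge 2$, while they coincide for $a\le 1$. Hence $\langle x_a, y_b\rangle = 1$ precisely when $(a,b)\in\{(0,0),(1,1)\}$ and is zero otherwise. Second, the comultiplication of the trivial generator is
\[
\Delta(x_n) \;=\; \sum_{p+q=n} x_p \otimes x_q,
\]
which is simply the statement that the Jacquet module (space of $\overline{N}_{(p,q)}$-coinvariants) of the trivial representation of $\overline{G}_n$ is the trivial representation of $\overline{M}_{(p,q)} = \overline{G}_p\times\overline{G}_q$.

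Armed with these, I would induct on the number of parts $l$ of $\beta$, the case $l=0$ being immediate. Setting $\beta' = (\beta_1,\ldots,\beta_{l-1})$ and writing $y_\beta = y_{\beta'}\,y_{\beta_l}$, the adjointness above combined with the fact that $\Delta$ is an algebra homomorphism yields
\[
\langle x_\alpha, y_\beta\rangle \;=\; \langle \Delta(x_\alpha), y_{\beta'}\otimes y_{\beta_l}\rangle \;=\; \sum_{p+q=\alpha} \langle x_p, y_{\beta'}\rangle \cdot \langle x_q, y_{\beta_l}\rangle,
\]
the sum ranging over weak compositions $p,q\in\mathbb{Z}_{\ge 0}^k$ with $p_i+q_i=\alpha_i$. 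Expanding $\langle x_q, y_{\beta_l}\rangle$ by the same device and applying the atomic-pairing formula to each tensor factor shows that this right-hand factor equals $1$ precisely when $q$ is the indicator vector $\mathbf{1}_S$ of a subset $S\subseteq\{1,\ldots,k\}$ of cardinality $\beta_l$, and vanishes otherwise; correspondingly $p=\alpha-\mathbf{1}_S$. Invoking the inductive hypothesis, one arrives at
\[
\langle x_\alpha, y_\beta\rangle \;=\; \sum_{|S|=\beta_l} s(\alpha - \mathbf{1}_S,\beta'),
\]
which is exactly the recursion for the bipartite-graph count $s(\alpha,\beta)$ obtained by stratifying the graphs in \eqref{eq:graph} according to the neighbor set of the $l$-th right vertex, closing the induction.

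The main subtlety to watch is the bookkeeping of weak compositions (with possible zero entries and in a fixed order) versus partitions: the vectors $p$, $q$ and $\alpha-\mathbf{1}_S$ above are only weak compositions, but because $x_0=1$ is the identity of $\mathfrak{R}$, the element $x_p$ is insensitive to zero entries and to reordering, and the combinatorial quantity $s(\cdot,\cdot)$ extends verbatim from partitions to weak compositions. This is really the only non-formal point; everything else is a direct consequence of the PSH axioms and the Jacquet-module calculation for the trivial character.
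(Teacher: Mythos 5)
The paper does not actually prove this proposition; it is quoted verbatim from Zelevinsky's book \cite[3.17(c)]{zel-book}, so any self-contained argument is ``different'' from the paper in the trivial sense. That said, your argument is essentially the standard PSH-algebra proof (and close in spirit to Zelevinsky's own): self-adjointness of multiplication and comultiplication, the grading, the atomic pairings $\langle x_a,y_b\rangle$, and an induction that peels off one part of $\beta$ and matches the resulting recursion with the stratification of the bipartite graphs in \eqref{eq:graph} by the neighbourhood of the last right-hand vertex. The recursion $\langle x_\alpha,y_\beta\rangle=\sum_{|S|=\beta_l}\langle x_{\alpha-\mathbf{1}_S},y_{\beta'}\rangle$ is correct, the extension of $s(\cdot,\cdot)$ and of the inductive statement to weak compositions is handled properly, and the base case is fine.

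One ingredient is used but never stated: to evaluate $\langle x_q, y_{\beta_l}\rangle=\langle x_{q_1}\cdots x_{q_k}, y_{\beta_l}\rangle$ ``by the same device'' you must apply adjointness with the comultiplication acting on the \emph{Steinberg} class, i.e.\ you need $\Delta(y_m)=\sum_{a+b=m} y_a\otimes y_b$, whereas your listed ingredients only include $\Delta(x_n)=\sum_{p+q=n}x_p\otimes x_q$. The missing formula is true --- the Jacquet module of the Steinberg representation of $\overline{G}_m$ with respect to any standard parabolic is the Steinberg representation of the Levi, with multiplicity one (equivalently, $y_m$ is obtained from $x_m$ by the canonical involution of the PSH-algebra, which commutes with $\Delta$) --- and once it is invoked, your computation that $\langle x_q,y_{\beta_l}\rangle$ equals $1$ exactly when $q$ is an indicator vector $\mathbf{1}_S$ with $|S|=\beta_l$ goes through. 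So the proof is correct modulo making this one parallel fact explicit; I would add it alongside the $\Delta(x_n)$ computation.
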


For $\alpha= (\alpha_1,\ldots,\alpha_k)\in P(n)$, we write the transposed partition $\alpha^t = (\beta_1,\ldots,\beta_l)$ to be given by $\beta_i  = \#\{ 1\leq j\leq k\;:\; \alpha_j \geq i\}$. Evidently, $(\alpha^t)^t = \alpha$.

Recall that the set of partitions $P(n)$ is equipped with a partial order defined by the dominance relation. 

A moment's reflection shows that $s(\alpha, \alpha^t)=1$ for all $\alpha \in P(n)$, and that $s(\alpha,\beta^t) = 0 $ for $\alpha  > \beta\in P(n)$. Hence, the transition matrix between the bases $\mathcal{X}$ and $\mathcal{Y}$, when ordered by $P(n)$-labels, becomes unitriangular with respect to that order.

\subsubsection{Class functions}
Let $\mathcal{C}(\overline{G}_n)$ be the (finite-dimensional) space of conjugation-invariant complex functions on the finite group $\overline{G}_n$ that are supported on unipotent elements.

Given a finite-dimensional complex representation $\sigma$ of $\overline{G}_n$, we set $\ch_u(\sigma)\in \mathcal{C}(\overline{G}_n)$ to be the restriction of the character function $g\mapsto \Tr(\sigma(g))$ to the unipotent elements of $\overline{G}_n$.

We obtain a linear map
\begin{equation}\label{eq:iso}
\resu: \mathbb{C}\otimes \mathfrak{R}_n \;\to \; \mathcal{C}(\overline{G}_n)\;,
\end{equation}
which takes an isomorphism class $[\sigma]\in \mathfrak{R}_n$ to the restricted function $\ch_u(\sigma)$.

\begin{lemma}\cite[10.3]{zel-book}\label{lem:isom}
The map $\resu$ is a linear isomorphism.
\end{lemma}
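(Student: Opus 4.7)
The plan is to establish the lemma through a dimension count combined with an injectivity verification. I first observe that both $\mathbb{C}\otimes \mathfrak{R}_n$ and $\mathcal{C}(\overline{G}_n)$ are finite-dimensional of the same dimension $|P(n)|$: the source dimension counts unipotent irreducible representations of $\overline{G}_n = GL_n(\mathbb{F})$, which are classically parameterized by partitions of $n$ (equivalently, by the irreducibles of the Weyl group $S_n$), while the target dimension counts unipotent conjugacy classes in $\overline{G}_n$, which by Jordan normal form are also indexed by $P(n)$. It therefore suffices to prove that $\resu$ is injective.

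To do so, I would work with the induction basis $\mathcal{X} = \{x_\alpha\}_{\alpha \in P(n)}$ of $\mathfrak{R}_n$ and show that the restricted class functions $\{\resu(x_\alpha)\}_{\alpha \in P(n)}$ remain linearly independent in $\mathcal{C}(\overline{G}_n)$. Since $x_\alpha$ is the class of $\Ind_{\overline{P}_\alpha}^{\overline{G}_n} \mathbf{1}$, the Frobenius formula for induced characters computes $\resu(x_\alpha)(u_\beta)$ as the number of partial flags of type $\alpha$ in $\mathbb{F}^n$ stabilized by a unipotent element $u_\beta$ of Jordan type $\beta \in P(n)$. This quantity is the classical Green polynomial $Q^\beta_\alpha(q)$ evaluated at the residue field size $q = |\mathbb{F}|$. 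The proof then reduces to verifying that the $|P(n)| \times |P(n)|$ matrix $M = (\resu(x_\alpha)(u_\beta))_{\alpha, \beta \in P(n)}$ is invertible.

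The main obstacle I anticipate is precisely this matrix invertibility. A clean route is to exploit the combinatorics of $u$-invariant subspaces: for a unipotent $u_\beta$ of Jordan type $\beta$, the lattice of $u_\beta$-stable subspaces is constrained by the flag $\ker u_\beta \subset \ker u_\beta^2 \subset \cdots$, and a careful accounting of which $\alpha$-type partial flags can nest inside this structure yields a triangular vanishing pattern for the entries of $M$ once rows and columns are reindexed by the dominance order on $P(n)$, with strictly positive diagonal contributions. Alternatively, and perhaps most efficiently, one may appeal to the well-documented invertibility of the Green polynomial transition matrix, which is a cornerstone of the character theory of $GL_n(\mathbb{F})$ and essentially encodes the $q$-deformation of the character table of $S_n$. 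Either route, combined with the dimension count above, establishes that $\resu$ is a linear isomorphism.
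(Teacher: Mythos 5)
The paper does not prove this statement at all --- it is quoted verbatim from Zelevinsky's book \cite[10.3]{zel-book} --- so your argument can only be judged on its own merits. Your skeleton is sound and is the standard one: both spaces have dimension $|P(n)|$ (unipotent irreducibles of $\overline{G}_n$ and unipotent conjugacy classes are each indexed by partitions of $n$), so it suffices to show that the functions $\resu(x_\alpha)$ are linearly independent, and the entries of the relevant matrix $M$ are indeed the fixed-point counts of $u_\beta$ on the partial flag variety of type $\alpha$.

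The gap is in your primary route to invertibility: there is no ``triangular vanishing pattern'' in the entries of $M$. Every unipotent element of $\overline{G}_n$ is conjugate into the Borel subgroup, hence into every standard parabolic $\overline{P}_\alpha$, so $u_\beta$ fixes at least one flag of \emph{every} type $\alpha$ and every entry of $M$ is strictly positive. Already for $n=2$ one gets $M=\bigl(\begin{smallmatrix}1&1\\q+1&1\end{smallmatrix}\bigr)$ (rows $x_{(2)},x_{(1,1)}$, columns $u=1$, $u$ regular), which is invertible but has no zero entries; the mechanism you describe (constraining $\alpha$-flags inside the kernel filtration of $u_\beta$) cannot produce vanishing, only restrictions on the Jordan types of the subquotients. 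A correct completion needs a genuinely different source of triangularity, e.g.\ comparing leading degrees in $q$ of the fixed-point polynomials, or Green's orthogonality relations, or Zelevinsky's own symmetric-function realization in which unipotent class indicators correspond to Hall--Littlewood functions and $x_\alpha$ to $h_\alpha$. Your fallback --- citing invertibility of the Green polynomial matrix --- does point at a true classical fact, but as stated it is essentially equivalent to the linear independence you are trying to prove (the permutation characters differ from the unipotent irreducible characters by the unitriangular Kostka matrix), so invoking it without that intermediate step is closer to restating the lemma than to proving it.
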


\subsection{Hecke algebra perspective}

Let $\tau_n$ denote the unipotent representation of $\overline{G}_n$, for which $[\tau_n] = x_{(1,\ldots,1)}$ holds in $\mathfrak{R}_n$. In other words, $\tau_n$ is the induction of the trivial representation of $B_n$.

It follows that the complex finite-dimensional intertwiner algebra
\[
H_n = \End_{\overline{G}_n}(\tau_n)
\]
acts on the space of $B_n$-invariants for each unipotent $\overline{G}_n$-representation.

The resulting functor is known to give a bijection on irreducible representation (for example, \cite[Theorem 6.1.1]{michel-book}). More precisely, taking a $\overline{G}_n$-representation to its $B_n$-invariants identifies $\mathfrak{R}_n$ with the Grothendieck group of (all) complex finite-dimensional representations of the algebra $H_n$. In particular, we will also write $[\sigma]\in \mathfrak{R}_n$, for a representation $\sigma$ of $H_n$.

We recall that $H_n$ is the Iwahori-Hecke algebra (of type $GL_n$), and can also be viewed as a deformation of the group algebra of the symmetric group $\mathfrak{S}_n$. Furthermore, it is known to be isomorphic to it (see \cite[Chapter 6.2]{michel-book}).

We note that the Steinberg representation of $\overline{G}_n$ has a $1$-dimensional space of $B_n$-invariants, which produces the $H_n$-representation corresponding to the sign representation of $\mathfrak{S}_n$, under the above isomorphism. Thus, viewing $\mathfrak{R}_n$ as the Grothendieck group of $H_n$, the elements $x_n,y_n\in \mathfrak{R}_n$ stand for the isomorphism classes of the two unique $1$-dimensional representations.

\subsubsection{Product structure}\label{sect:adjun}
For any $\alpha=(\alpha_1,\ldots,\alpha_k)\in C(n)$, let us denote the algebra
\[
H_\alpha  := H_{\alpha_1}\otimes \cdots \otimes H_{\alpha_k}\;.
\]
Since $\tau_n = \tau_{\alpha_1} \times \cdots\times \tau_{\alpha_k}$, we have an embedding of algebras
\begin{equation}\label{eq:embed}
\iota_{\alpha}: H_\alpha \cong \End_{\overline{M}_{\alpha}}(\tau_{\alpha_1} \otimes \cdots\otimes \tau_{\alpha_k}) \;\to H_n\;.
\end{equation}

For a tuple of representations $\sigma_i$ of $H_{\alpha_i}$, $i=1,\ldots,k$, the induction product $ \sigma_1 \times \cdots \times \sigma_k$ is defined as the $H_n$-representation induced from $\sigma_1\otimes \cdots \otimes \sigma_k$ through the above embedding.

Since $B_n = (\overline{M}_{\alpha}\cap B_n) \overline{N}_{\alpha}$ holds, in the ring $\mathfrak{R}$, we clearly have 
\[
[\sigma_1\times\cdots\times \sigma_k] = [\sigma_1]\cdot\cdots \cdot [\sigma_k],
\]
with the latter product already defined through $\overline{G}_n$-representations.

We also let $\widehat{\mathbf{r}}_\alpha$ be the functor taking $H_n$-representations to $H_\alpha$-representations by restricting through the embedding $\iota_\alpha$.

Due to semisimplicity of all algebras involved, the functor $\widehat{\mathbf{r}}_\alpha$ is both right and left adjoint to the induction product functor.

\section{Murnaghan character expansion}

For $\alpha\in P(n)$, let $\mathfrak{n}_{\alpha} < \mathfrak{g}_n$ be the Lie algebra (consisting of block-upper-triangular matrices) of the unipotent radical $N_\alpha$ of $P_\alpha$.

We set $\mathcal{O}_\alpha$ to be unique nilpotent $\Ad(G_n)$-orbit in $\mathfrak{g}_n$, for which $\mathfrak{n}_{\alpha^t} \cap \mathcal{O}_{\alpha}$ is dense in $\mathfrak{n}_{\alpha^t}$.

\begin{remark}\label{rem:choice}
The duality involved in our notation is natural from the geometric point of view, so that the dominance order on partition corresponds to the topological closure order on nilpotent orbits. 

In terms of matrices, one can take $\mathcal{O}_\alpha$ to be the nilpotent orbit of the Jordan matrix composed of block sizes that are desribed by $\alpha$. The nilpotent matrix of the Weyr canonical form that is described by $\alpha$ then belongs to $\mathfrak{n}_{\alpha} \cap \mathcal{O}_{\alpha^t}$.

We note that this choice differs from the notations of \cite{murn-crelle}, on which we base our other conventions.

\end{remark}

Integrating over the $G_n$-invariant measure on $\mathcal{O}_{\alpha^t}$ defines a distribution $\mu_{\mathcal{O}_\alpha}\in C_c^\infty(\mathfrak{g}_n)^\ast$, whose Fourier transform is a distribution described by the function $\widehat{\mu}_{\mathcal{O}_\alpha}$ on regular elements of $\mathfrak{g}_n$.

We normalize all measures involved to fit the conventions in \cite{murn-crelle}. In particular, the normalization is pinned by choosing $\widehat{\mu}_{\mathcal{O}_{\alpha^t}}$ to equal the character expansion near zero of the $P_\alpha$-parabolic induction of the trivial representation of $M_\alpha$.

For a smooth irreducible representation $\pi$ of $G_n$, the coefficients $c_\alpha(\pi)$ are now defined as in the identity \eqref{eq:expa} given in the introduction section.

\subsection{Restriction to the finite Hecke algebra}

Let us recall the relation of the Iwahori--Hecke algebras $H_n$ and the ring $\mathfrak{R}$ with the representation theory of the $p$-adic group $G_n$.

For any totally disconnected locally compact group $T_1$ and a open compact subgroup $T_2< T_1$, let us write $\mathcal{H}(T_1,T_2)$ for the space of compactly supported complex functions on $T_1$ that are bi-invariant with respect to $T_2$. It is naturally equipped with an associative (unital) convolution product.

We have a standard identification $H_n\cong \mathcal{H}(\overline{G}_n,B_n)$ for algebras of intertwiner operators.

The (standard) Iwahori subgroup $K^1_n < I_n < K_n$ is taken as the preimage of $B_n$ under the quotient map $K_n\to \overline{G}_n$. We thus obtain an identification
\begin{equation}\label{eq:ident}
H_n\cong \mathcal{H}(K_n,I_n)\;.
\end{equation}

Recall that a smooth irreducible representation of $G_n$ is called Iwahori-spherical, if it possesses non-zero $I_n$-invariant vectors.

For an irreducible Iwahori-spherical representation $(\pi,V)$ of $G_n$, the space of invariant vectors $V^{I_n}$ naturally becomes a $H_n$-representation through \eqref{eq:ident}, which we denote by $\pi_0$.

We now interpret a basic case of the main result of \cite{murn-crelle}.

\begin{proposition}\label{prop:murngh}

Assume that the residual characteristic of the field $F$ is greater than $2n$.

Let $\pi$ be an irreducible Iwahori-spherical representation of $G_n$, and $\pi_0$ be the resulting representation of the Iwahori-Hecke algebra $H_n$.

As elements of $\mathfrak{R}_n$, the identity
\[
[\pi_0] = \sum_{\alpha\in P(n)} c_\alpha(\pi) x_{\alpha^t}
\]
holds.

\end{proposition}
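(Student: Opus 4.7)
The plan is to identify Murnaghan's formula for the depth-zero local character expansion with an equality in the Grothendieck ring $\mathfrak{R}_n$, via the isomorphism $\resu$ of Lemma \ref{lem:isom}. For an Iwahori-spherical $\pi$, the space $\pi^{K_n^1}$ is a unipotent $\overline{G}_n$-representation whose $B_n$-invariant subspace recovers $\pi_0 = \pi^{I_n}$, so that $[\pi^{K_n^1}] = [\pi_0]$ in $\mathfrak{R}_n$ under the Iwahori--Hecke algebra correspondence. The whole comparison will thus be carried out at the class-function level inside $\mathcal{C}(\overline{G}_n)$.

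Concretely, by the normalization clause preceding the statement of the proposition, $\widehat{\mu}_{\mathcal{O}_\beta}(X) = \Theta_{I_{\beta^t}}(1+X)$ for regular $X$ near zero, where we abbreviate $I_\alpha := \Ind_{P_\alpha}^{G_n}(\trivchar)$. Substituting into \eqref{eq:expa} yields
\[
\Theta_\pi(1+X) \;=\; \sum_{\beta\in P(n)} c_\beta(\pi)\,\Theta_{I_{\beta^t}}(1+X)\;.
\]
Murnaghan's theorem, valid under the residual characteristic hypothesis, asserts that for any depth-zero Iwahori-spherical $\sigma$ and any $u \in K_n$ whose image in $\overline{G}_n$ is unipotent, $\Theta_\sigma(u)$ equals the trace of that image on the $\overline{G}_n$-representation $\sigma^{K_n^1}$. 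Applying this simultaneously to $\pi$ and to each $I_{\beta^t}$ at unipotent elements of $K_n$ sufficiently close to the identity, and using that $(I_\alpha)^{K_n^1} \cong \Ind_{\overline{P}_\alpha}^{\overline{G}_n}(\trivchar)$ has class $x_\alpha$ in $\mathfrak{R}_n$, the displayed identity transfers to
\[
\resu([\pi_0]) \;=\; \sum_{\beta\in P(n)} c_\beta(\pi)\,\resu(x_{\beta^t})\;.
\]
The claim follows by applying $\resu^{-1}$, which is well-defined thanks to Lemma \ref{lem:isom}.

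The principal obstacle is the careful reading of \cite{murn-crelle}, whose conventions differ slightly from those adopted here, as already warned in Remark \ref{rem:choice}, together with accurate bookkeeping of the transposition $\beta \mapsto \beta^t$ passing from a $p$-adic nilpotent orbit in $\mathfrak{g}_n$ to the corresponding unipotent conjugacy class in $\overline{G}_n$. The residual-characteristic condition $p>2n$ enters precisely at this step, ensuring both the validity of an $\exp$-type matching between $\Ad(G_n)$-orbits in $\mathfrak{g}_n$ and unipotent classes in $\overline{G}_n$, and the applicability of Waldspurger's orbital-integral formulas on the parahoric subgroup $K_n^1$ that underlie Murnaghan's theorem.
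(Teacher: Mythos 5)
Your proposal is correct and follows essentially the same route as the paper: identify $[\pi^{K_n^1}]$ (a unipotent $\overline{G}_n$-representation, by the theory of types) with $[\pi_0]\in\mathfrak{R}_n$, convert the local character expansion into an identity of class functions on unipotent elements via Murnaghan's depth-zero formula, and invert the isomorphism $\resu$ of Lemma \ref{lem:isom}. The only difference is presentational: you unpack the content of \cite[Lemma 4.5]{murn-crelle} through the normalization $\widehat{\mu}_{\mathcal{O}_{\alpha^t}}=\Theta_{\Ind_{P_\alpha}^{G_n}\trivchar}(1+\cdot)$ and the trace-on-$K_n^1$-invariants statement, whereas the paper cites that lemma directly for the identity $\resu([\pi_0])=\sum_\alpha c_\alpha(\pi)\resu(x_{\alpha^t})$.
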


\begin{proof}

Let $V$ denote the space of $\pi$. We have $\{0\}\neq V^{I_n} \subseteq V^{K^1_n}$. A finite-group representation $\overline{\pi}$ of $\overline{G}_n$ on $V^{K^1_n}$ is obtained by factoring through the $\pi(K_n)$-action.

By (the depth zero case of) \cite[Lemma 4.5]{murn-crelle}, we now have
\[
\resu(\overline{\pi}) = \sum_{\alpha\in P(n)} c_\alpha(\pi) \resu(x_{\alpha^t})\;.
\]
Note, that viewing $V^{I_n}$ as the space of $B_n$-invariant vectors for $\overline{\pi}$, the representation $\pi_0$ can be directly constructed out of $\overline{\pi}$.

Moreover, since $\pi$ is Iwahori-spherical, we know by the general theory of types for $G_n$ (see \cite{mp94} or \cite{morris}) that $\overline{\pi}$ must be a unipotent representation. In other words, $[\overline{\pi}]\in \mathfrak{R}_n$ is a well-defined element (that is equal to $[\pi_0]$).

Hence, the formula now follows from the isomorphism in  Lemma \ref{lem:isom}.

\end{proof}

\section{Degenerate Whittaker dimensions}

Let $\psi: F  \to\mathbb{C}^\times$ be a fixed non-zero additive character. We write $U_n= N_{(1,\ldots,1)}  <G_n$ for the standard maximal unipotent subgroup.

Given a partition $\alpha \in P(n)$, let $I_{\alpha} =\{1,\ldots,n\} \setminus \{\alpha_1 + \ldots +\alpha_j\}_{j=1}^k$ be the corresponding set of indices. Then, $\psi_\alpha: U_n\to\mathbb{C}^\times$ is the group character taking a matrix $(u_{i,j})\in U_n$ to $\psi( \sum_{i\in I_\alpha} u_{i,i+1})$.

Each $\alpha\in P(n)$ thus defines the \textit{principal degenerate Whittaker functor} $\Wh_{\alpha}$ by taking a smooth representation $(\pi,V)$ of $G_n$ to the co-invariant vector space
\[
\Wh_{\alpha}(\pi) = V / \sspan \{ \pi(u)v-\psi_{\alpha}(u)v \;:\;v\in V,\, u\in U_n\}\;.
\]

In the general formalism of \cite{mw87,ggs}, the functor $\Wh_\alpha$ becomes associated with a degenerate model arising from the nilpotent orbit $\mathcal{O}_{\alpha}$. It again justifies our enumeration of nilpotent orbits as noted in Remark \ref{rem:choice}.

As defined in the introduction section, we record the dimensions of principal degenerate Whittaker models as
\[
d_\alpha(\pi) = \Wh_{\alpha}(\pi) \;.
\]

Similarly, the Jacquet functor produces a $M_\alpha$-representation on the co-invariant space
\[
\mathbf{r}_{\alpha}(\pi) = V / \sspan \{ \pi(u)v- v \;:\;v\in V,\, u\in N_\alpha \}\;.
\]

For $\alpha = (n)$, $I_\alpha = I$ and $\psi_{\alpha}$ is a non-degenerate character. In this case, $\Wh = \Wh_{\alpha}$ is the (non-degenerate) Whittaker functor.

The Whitakker functor on smooth $M_\alpha$-representations, which we denote as $\Wh$ as well, may be naturally identified with $\Wh \otimes\cdots \otimes \Wh$, when $M_\alpha$ viewed as a product of general linear groups.

It is easy to verify the natural functor identification
\begin{equation}\label{eq:jacq}
\Wh_\alpha = \Wh \circ \mathbf{r}_{\alpha}\;,
\end{equation}
for each $\alpha\in P(n)$.

\subsection{Hecke algebra interpretation}

Let us recall a key corollary of the results of Chan-Savin \cite{CS1} and Barbasch-Moy \cite{bm94}, when applied to the $GL_n$ case.

\begin{proposition}\cite[Corollary 4.5]{CS1}\label{prop:chansav}
Let $\pi$ be a smooth representation of $G_n$ of finite-length, whose irreducible subquotients are all Iwahori-spherical. Let $\pi_0$ be the resulting finite-dimensional representation of $H_n$ on the space of $I_n$-invariant vectors in $\pi$.

The dimension of the Whittaker model $\Wh(\pi)$ is given by the pairing value $\langle [\pi_0], y_n\rangle$ in $\mathfrak{R}_n$, against the element $y_n\in \mathfrak{R}_n$ representing the sign/Steinberg representation.
\end{proposition}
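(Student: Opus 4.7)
The plan is to reduce the identity to a purely finite Hecke algebra statement via the exactness of $\Wh$ and the additivity of the pairing $\langle -, y_n\rangle$ in its first argument, and then verify it on irreducible Iwahori-spherical $\pi$. For such $\pi$, classical uniqueness results of Shalika and Rodier yield $\dim \Wh(\pi) \in \{0,1\}$, with value $1$ precisely when $\pi$ is generic. On the Hecke side, by semisimplicity of $H_n$, the pairing value $\langle [\pi_0], y_n\rangle$ equals $\dim \Hom_{H_n}(\pi_0, y_n)$, i.e.\ the multiplicity of the sign character of $H_n$ inside $\pi_0$. The reduction target is therefore to show that the sign character of $H_n$ occurs in $\pi_0$ with multiplicity $1$ when $\pi$ is generic, and with multiplicity $0$ otherwise.

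To connect the two sides, I would construct a natural linear map from $e\cdot \pi_0$ to $\Wh(\pi)$, where $e\in H_n$ is the sign idempotent, and show it is an isomorphism. The construction exploits the Iwahori factorization of $I_n$ into its unipotent, Levi, and opposite-unipotent components, together with the fact that the restriction of $\psi$ to $I_n\cap U_n$ factors through a finite quotient. The sign-eigenvector condition then matches the $\psi$-equivariance condition defining Whittaker functionals, under the identification $H_n\cong \mathcal{H}(K_n,I_n)$. The base case of the Steinberg representation of $G_n$ confirms both sides equal $1$: its $I_n$-fixed line is of sign type, and it is indeed generic with a one-dimensional Whittaker model.

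The main obstacle will be showing this natural map is an isomorphism for general Iwahori-spherical $\pi$, not only Steinberg. Surjectivity uses that $I_n$-fixed vectors generate $\pi$ as a $G_n$-module throughout the Iwahori block, so any Whittaker functional is determined by its restriction to $\pi_0$, and the $\psi$-equivariance constraint forces such restriction to land in the sign-isotypic component. Injectivity can be checked on standard modules, such as unramified principal series, via Casselman--Shalika-type non-vanishing, and then transferred to general $\pi$ by exactness of $\Wh$ combined with induction on composition length. The analogous Lie-algebra-level computation carried out in Barbasch--Moy \cite{bm94} provides a useful organizing template for the verification, and the matching between the two levels is smoothed by the semisimplicity of $H_n$.
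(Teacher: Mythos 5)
The paper does not actually prove this proposition: it is imported verbatim as \cite[Corollary 4.5]{CS1} (with credit also to \cite{bm94}), so there is no internal proof to compare against. Your strategy --- reduce to a statement about the sign-isotypic component of $\pi_0$ and realize $\Wh(\pi)$ as $e\cdot\pi_0$ via Iwahori factorization --- is indeed the strategy of the cited works, so you have correctly identified the mechanism. However, as written the argument has a genuine gap at its central step, in both directions. For surjectivity, the claim that ``any Whittaker functional is determined by its restriction to $\pi_0$'' because $I_n$-fixed vectors generate $\pi$ is false as stated: a Whittaker functional is only $U_n$-equivariant, not $G_n$-equivariant, so generation of $\pi$ by $\pi_0$ as a $G_n$-module gives no control over the functional's values off $\pi_0$. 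The actual content of Chan--Savin is a computation of the Iwahori-block summand of the Gelfand--Graev representation $\operatorname{ind}_{U_n}^{G_n}\psi$ (via a filtration indexed by $U_nwI_n$-cells, showing exactly one cell supports a $\psi$-equivariant functional), identifying it with the antispherical module; this is a nontrivial geometric computation that your sketch replaces with a heuristic. For injectivity, ``transferred to general $\pi$ by exactness of $\Wh$ combined with induction on composition length'' is not a valid inference: exactness does not propagate injectivity of a specific natural map from some objects to all objects.

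The good news is that your own observation that both sides are additive points to a complete and more elementary repair that avoids constructing the map altogether. Both $\pi\mapsto\dim\Wh(\pi)$ (by exactness of twisted coinvariants) and $\pi\mapsto\langle[\pi_0],y_n\rangle$ (by exactness of $I_n$-invariants) factor through the Grothendieck group of the Iwahori block, which is spanned by the standard modules $\mathrm{Ind}_{P_\alpha}^{G_n}(\mathrm{St}\otimes\chi)$. For these, Rodier's heredity and uniqueness of Whittaker models give $\dim\Wh=1$, while on the Hecke side $[\pi_0]=y_\alpha$ and $\langle y_\alpha,y_n\rangle=\dim\Hom_{H_\alpha}(\epsilon_{\alpha_1}\otimes\cdots\otimes\epsilon_{\alpha_k},\widehat{\mathbf{r}}_\alpha(\epsilon_n))=1$ by the adjunction of Section \ref{sect:adjun}. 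If you want a self-contained proof rather than a citation, this numerical route is the one to write down; the explicit isomorphism $e\cdot\pi_0\cong\Wh(\pi)$ requires the Gelfand--Graev cell analysis you have only gestured at.
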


We would like to state an enhancement to the above proposition by showing that all dimensions $d_\alpha(\pi)$, $\alpha\in P(n)$, for Iwahori-spherical representations $\pi$, can be extracted out of the isomorphism class of $\pi_0$.

For that goal we need to produce the Hecke algebra analog of the Jacquet functor that appears in the description \eqref{eq:jacq}.

For a composition $\alpha=(\alpha_1,\ldots,\alpha_k)\in C(n)$, let $\tau_\alpha$ be the $\overline{M}_{\alpha}$-representation induced from the trivial representation of $B_n \cap \overline{M}_\alpha$. Thus, $\tau_\alpha \cong  \tau_{\alpha_1}\otimes\cdots \otimes \tau_{\alpha_k}$, when identifying $\overline{M}_{\alpha}$ with a group product $\times_{i=1}^k \overline{G}_{\alpha_i}$. We see a chain of natural algebra identifications
\begin{equation}\label{eq:ident-tens}
\mathcal{H}(K_n \cap M_\alpha, I_n\cap M_\alpha) \cong \mathcal{H}(\overline{M}_\alpha, B_n \cap \overline{M}_\alpha)\cong \End_{\overline{M}_\alpha}(\tau_\alpha) \cong H_\alpha;.
\end{equation}

For a finite-length smooth representation $\rho$ of $M_\alpha$, we set $\rho_0$ to be the $H_\alpha$-representation on the $ I_n\cap M_\alpha$-invariant vectors in $\rho$, obtained through the above identification with a convolution algebra.


\begin{lemma}\label{lem:jacqu}

For an irreducible Iwahori-spherical representation $\pi$ of $G_n$ and a composition $\alpha\in C(n)$, we have an isomorphism
\[
\widehat{\mathbf{r}}_\alpha(\pi_0)\cong (\mathbf{r}_\alpha (\pi))_0
\]
of $H_\alpha$-representations.

\end{lemma}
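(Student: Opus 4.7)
The plan is to exhibit the isomorphism as being induced by the natural projection map $p:V \to V/\operatorname{span}\{\pi(u)v-v : u\in N_\alpha\}$ that defines the Jacquet functor, restricted to the subspace $\pi_0 = V^{I_n}$. The Iwahori factorization $I_n = (I_n\cap \overline{N}_\alpha)\cdot (I_n\cap M_\alpha)\cdot (I_n\cap N_\alpha)$, which holds for the standard Iwahori subgroup with respect to any standard parabolic, will be the geometric backbone of the argument.

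The underlying vector space isomorphism $p : \pi^{I_n}\xrightarrow{\sim} \mathbf{r}_\alpha(\pi)^{I_n\cap M_\alpha}$ is a classical instance of the Borel--Casselman type result for Iwahori-invariants of Jacquet modules: the restriction of $p$ lands in the $(I_n\cap M_\alpha)$-invariants because the other two factors of the Iwahori decomposition act trivially on the source or collapse under $p$; surjectivity follows because any $(I_n\cap M_\alpha)$-invariant vector in $\mathbf{r}_\alpha(\pi)$ lifts to an $(I_n\cap N_\alpha)$-invariant vector in $\pi$ (using exactness of $\mathbf{r}_\alpha$ and averaging over the compact group $I_n \cap \overline{N}_\alpha$, which preserves the lift modulo $N_\alpha$-coinvariants); injectivity is the more delicate fact which I would derive either by invoking the standard Bernstein--Borel projector onto $I_n$-invariants, or more concretely, by observing that an element of $\pi^{I_n}$ projecting to zero would lie in $\pi(N_\alpha)$, and a compactness argument on $I_n\cap N_\alpha$-fixed lifts forces it to vanish.

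The second and more technical step is to promote this vector-space bijection to an $H_\alpha$-equivariant isomorphism. I would first transfer both $H_\alpha$-structures to the convolution algebra model: the right-hand side carries the action of $\mathcal{H}(K_n\cap M_\alpha, I_n\cap M_\alpha)$ via the identification \eqref{eq:ident-tens}, while the left-hand side receives the $H_\alpha$-action by restriction through $\iota_\alpha$, which in turn is realized inside $\mathcal{H}(K_n, I_n)$. The key verification is then that for $f\in \mathcal{H}(K_n\cap M_\alpha, I_n\cap M_\alpha)$, the corresponding element $\iota_\alpha(f)\in \mathcal{H}(K_n, I_n)$ may be represented, up to a constant determined by the volume of $I_n\cap \overline{N}_\alpha$, by the function on $K_n$ extending $f$ by zero off $K_n\cap P_\alpha$ and averaging over the Iwahori radical; this compatibility between the embedding $\iota_\alpha$ and the parabolic-extension-then-average operation is exactly what intertwines the two convolution actions through $p$.

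The main obstacle I anticipate is the convolution bookkeeping in this last step: getting the measure normalizations aligned between the algebras $\mathcal{H}(\overline{M}_\alpha, B_n\cap \overline{M}_\alpha)$, $\mathcal{H}(K_n\cap M_\alpha, I_n\cap M_\alpha)$ and $\mathcal{H}(K_n, I_n)$, and showing that the difference between the action of $\iota_\alpha(f)$ on a vector $v\in V^{I_n}$ and the lifted action of $f$ through $p$ lies in $\pi(N_\alpha)$. Once this is settled, naturality in $\pi$ and finite-length considerations make the remaining verifications formal.
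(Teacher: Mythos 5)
Your proposal follows exactly the paper's route: the paper establishes the vector-space isomorphism $p_\alpha: V^{I_n}\to V_\alpha^{I_n\cap M_\alpha}$ by citing Moeglin--Waldspurger (Proposition I.4.3 of \cite{mw-zel}), and the $\mathcal{H}(K_n\cap M_\alpha, I_n\cap M_\alpha)$-equivariance $p_\alpha(\pi(\iota_\alpha(f))v)=\pi(f)p_\alpha(v)$ by citing Bushnell's results on support-preserving embeddings of Hecke algebras (\cite{bush-lms}), which are precisely the two facts you propose to prove by hand via the Iwahori factorization and the extension-by-zero description of $\iota_\alpha$. Your sketch is correct in outline; the paper simply outsources both steps to the literature rather than carrying out the convolution bookkeeping you anticipate.
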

\begin{proof}
Let $V$ be the space of $\pi$, and $V_\alpha$ the space of its Jacquet module $\mathbf{r}_\alpha(\pi)$.

It is known (e.g. \cite[Proposition I.4.3]{mw-zel}) that the projection
\begin{equation}\label{eq:proj}
p_\alpha: V^{I_n}\to V_\alpha^{I_n\cap M_\alpha}
\end{equation}
is an isomorphism of vectors spaces.

Moreover, unpacking the claim in \cite[Proposition 5 and Remark 5]{bush-lms} for the case of $I_n$, we see that $p_\alpha$ intertwines the resulting actions of $\mathcal{H}(K_n \cap M_\alpha, I_n\cap M_\alpha)$. In more detail,
\[
p_\alpha(\pi(\iota_\alpha(f)) v ) = \pi(f) p_\alpha(v)
\]
holds, for all $v\in V^{I_n}$ and $f\in\mathcal{H}(K_n \cap M_\alpha, I_n\cap M_\alpha)$. Here, $\pi$ stands for the actions of the corresponding convolution algebras on invariant vectors.

\end{proof}

\begin{remark}
The normalization twist issues in \cite{bk98,bush-lms} and the commonly encountered choice of an opposite parabolic in isomorphisms of the form \eqref{eq:proj} are absent in our discussion, because of our focus on actions rising from representations restricted to the compact group $K_n$.
\end{remark}


We are ready to state the degenerate analog to Proposition \ref{prop:chansav}.

\begin{proposition}\label{prop:deg-hecke}
Let $\pi$ be an irreducible Iwahori-spherical representation of $G_n$, and $\pi_0$ be the resulting representation of the Iwahori-Hecke algebra $H_n$.

For any partition $\alpha\in P(n)$, the dimension of the corresponding degenerate Whittaker models space is given as
\[
d_{\alpha}(\pi) = \langle [\pi_0], y_\alpha \rangle\;.
\]

\end{proposition}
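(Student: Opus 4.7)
The plan is to chain the functorial decomposition $\Wh_\alpha = \Wh \circ \mathbf{r}_\alpha$ from \eqref{eq:jacq} with a product-of-factors version of Proposition \ref{prop:chansav}, and then translate the resulting pairing back to $[\pi_0]$ via Lemma \ref{lem:jacqu} together with Frobenius reciprocity for the adjunction recorded in Section \ref{sect:adjun}. First, applying \eqref{eq:jacq} gives
\[
d_\alpha(\pi) \;=\; \dim_{\mathbb{C}} \Wh\bigl(\mathbf{r}_\alpha(\pi)\bigr),
\]
which reduces the problem to computing a non-degenerate Whittaker dimension for the Jacquet module, viewed as a finite-length smooth representation of the Levi $M_\alpha \cong \prod_{i=1}^k G_{\alpha_i}$.

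Next, I would verify that every irreducible subquotient of $\mathbf{r}_\alpha(\pi)$ is Iwahori-spherical for $M_\alpha$. Since $\pi$ lies in the principal Bernstein block of $G_n$, and the Jacquet functor sends this block into the principal Bernstein block of $M_\alpha$, all constituents of $\mathbf{r}_\alpha(\pi)$ are Iwahori-spherical for $I_n \cap M_\alpha$. With this in hand, Proposition \ref{prop:chansav} applied separately on each factor $G_{\alpha_i}$, together with the external-tensor-product factorization of $\Wh$ on $M_\alpha$ and the identification of the Grothendieck ring of $H_\alpha$-modules with $\bigotimes_{i=1}^k \mathfrak{R}_{\alpha_i}$ (valid by semisimplicity of all algebras involved), yields
\[
\dim \Wh\bigl(\mathbf{r}_\alpha(\pi)\bigr) \;=\; \bigl\langle \bigl[\bigl(\mathbf{r}_\alpha(\pi)\bigr)_0\bigr],\; y_{\alpha_1}\otimes \cdots \otimes y_{\alpha_k} \bigr\rangle_{H_\alpha},
\]
where the pairing on the right is the tensor product of the natural pairings on each $\mathfrak{R}_{\alpha_i}$.

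Finally, Lemma \ref{lem:jacqu} identifies $[(\mathbf{r}_\alpha(\pi))_0]$ with $[\widehat{\mathbf{r}}_\alpha(\pi_0)]$, and the induction-restriction adjunction of Section \ref{sect:adjun} yields a Frobenius reciprocity statement at the level of Grothendieck groups. Together with the observation that $y_\alpha = y_{\alpha_1}\cdots y_{\alpha_k}$ is by definition the induction product $y_{\alpha_1} \times \cdots \times y_{\alpha_k}$, this transports the $H_\alpha$-pairing above into $\langle [\pi_0], y_\alpha \rangle$, as desired. The main anticipated obstacle is verifying that the constituents of $\mathbf{r}_\alpha(\pi)$ are all Iwahori-spherical, which requires a careful appeal to the types-theoretic structure of the principal Bernstein block. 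A secondary technical point is to make precise the compatibility of the $H_\alpha$-pairing with the $H_n$-pairing under Frobenius adjunction, which amounts to the self-adjointness of the Hopf-algebra pairing on $\mathfrak{R}$ specializing correctly to our situation.
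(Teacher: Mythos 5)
Your proposal is correct and follows essentially the same route as the paper's proof: both arguments combine the factorization $\Wh_\alpha = \Wh\circ\mathbf{r}_\alpha$, a factor-by-factor application of Proposition \ref{prop:chansav} to the Jordan--H\"older constituents of $\mathbf{r}_\alpha(\pi)$, Lemma \ref{lem:jacqu}, and the induction--restriction adjunction of Section \ref{sect:adjun}; you merely run the chain from $d_\alpha(\pi)$ toward $\langle[\pi_0],y_\alpha\rangle$ rather than the reverse. Your explicit check that the constituents of $\mathbf{r}_\alpha(\pi)$ remain Iwahori-spherical is a point the paper leaves implicit, and is a welcome addition.
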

\begin{proof}

By the adjunction described in Section \ref{sect:adjun}, we may write
\begin{equation}\label{eq:proof}
\langle [\pi_0], y_\alpha \rangle = \dim \Hom_{H_\alpha} (\widehat{\mathbf{r}}_\alpha(\pi_0), \epsilon_{\alpha_1}\otimes \cdots\otimes \epsilon_{\alpha_k})\;,
\end{equation}
where $\alpha = (\alpha_1,\ldots,\alpha_k)$ and $\epsilon_{\alpha_i}$ stands for the sign representation of $H_{\alpha_i}$ (that is, $y_\alpha = [\epsilon_{\alpha_1}\times \cdots \times \epsilon_{\alpha_k}]$).

Let us write $\{\sigma^i_1 \otimes \cdots \otimes \sigma^i_k\}_{i=1}^t$ for the Jordan-H\"{o}lder series of $\mathbf{r}_\alpha(\pi)$, when $M_\alpha$ is identified with $\times_{i=1}^k \overline{G}_{\alpha_i}$.

By exactness of the functor of taking $I_n\cap M_\alpha$-invariants, we have 
\[
(\mathbf{r}_\alpha(\pi))_0 = \bigoplus_{i=1}^t (\sigma^i_1)_0 \otimes \cdots \otimes (\sigma^i_k)_0\;.
\]
Therefore, by \eqref{eq:proof} and Lemma \ref{lem:jacqu}, we may write
\[
\langle [\pi_0], y_\alpha \rangle = \sum_{i=1}^t \prod_{j=1}^k \dim_{G_{\alpha_j}} ((\sigma^i_j)_0, \epsilon_{\alpha_j})= \sum_{i=1}^t \prod_{j=1}^k  \langle [(\sigma^i_j)_0], y_{\alpha_j} \rangle \;.
\]
When applying Proposition \ref{prop:chansav}, the last value becomes equal to $d_{\alpha}(\pi)$. 

\end{proof}

Theorem \ref{thm:main} is now a direct consequence of the two expansions in Proposition \ref{prop:murngh} and Proposition \ref{prop:deg-hecke}, and the change of basis formula in Proposition \ref{prop:graph}.

\bibliographystyle{alpha}
\bibliography{propo2}{}

\end{document}